\begin{document}
\title{On the approximation of the Riemannian barycenter}
%
%
\author{Simon Mataigne\inst{1}\orcidID{0009-0004-1970-8102} \and P.-A.~Absil\inst{1}\orcidID{0000-0003-2946-4178} \and Nina Miolane\inst{2}\orcidID{0000-0002-1200-9024}}
\authorrunning{Simon Mataigne, P.-A.~Absil and Nina Miolane}
%
\institute{ICTEAM Institute, UCLouvain, Louvain-la-Neuve, Belgium.\\
\email{simon.mataigne@uclouvain.be, pa.absil@uclouvain.be} \and
UC Santa Barbara, Electrical and Computer Engineering, Santa Barbara, CA.\\
\email{ninamiolane@ucsb.edu}
}
\maketitle              
\begin{abstract}
We present a method for computing an approximate Riemannian barycenter of a collection of points lying on a Riemannian manifold. Our approach relies on the use of theoretically proven under- and over-approximations of the Riemannian distance function. We compare it to Riemannian steepest descent on the exact objective function of the Riemannian barycenter and to an approach that approximates the Riemannian logarithm using lifting maps. Experiments are conducted on the Stiefel manifold.
\keywords{Riemannian barycenter \and Karcher mean \and Fr\'echet mean \and Riemannian center of mass \and Bounds \and Stiefel manifold.}
\end{abstract}
\section{Introduction}
Computing the Riemannian barycenter of a dataset lying on a Riemannian manifold is a fundamental problem in statistics on manifolds. See,~e.g.,~\cite{Pennec2006} and references therein for applications. We outline the key aspects of this problem. 

Let $\mathcal{M}$ be a complete manifold equipped with a Riemannian metric $g$ and $d_g:\mathcal{M}^2\rightarrow \mathbb{R}$ be the distance function induced by the Riemannian metric; see \cref{sec:preliminaries} for details.
Given a collection of points $\{x_i\}_{i=1}^N\subset \mathcal{M}$ and a parameter $\nu\geq 1$, let
\begin{align}
\label{eq:karcher_mean}
f_\nu:\mathcal{M}\rightarrow\mathbb{R}:x\mapsto\left(\sum_{i=1}^N d_g(x,x_i)^\nu\right)^\frac{1}{\nu}\quad\text{and}\quad x^* \in \mathrm{arg}\hspace{-2pt}\min_{x\in\mathcal{M}}f_\nu(x).
\end{align}
Conditions regarding the existence and uniqueness of $x^*$ can be found, e.g., in~\cite{Afsari11}. The point $x^*$ is called a \emph{Riemannian $l^\nu$-barycenter} of $\{x_i\}_{i=1}^N$. Classical choices for $\nu$ are $\nu=1, 2$, or $\nu\rightarrow\infty$ (minimax problem). For $\nu=2$, $x^*$ generalizes the notion of Euclidean mean on Riemannian manifolds; it is then often referred to as a Fr\'echet mean or a Karcher mean \cite{karcher2014}. For $\nu=1$, $x^*$ generalizes the median. 

Solving~\eqref{eq:karcher_mean} usually relies on gradient-based optimization algorithms, which require the computation of \emph{Riemannian logarithms} at each iteration \cite{Afsari13}. 
However, there are important manifolds lacking tractable algorithms that are guaranteed to compute the Riemannian logarithm within prescribed accuracy, e.g., the Stiefel manifold of orthonormal $p$-frames in $\mathbb{R}^n$, $n> p$, defined by
\begin{equation*}
	\mathrm{St}(n,p)\coloneq\{X\in\mathbb{R}^{n\times p} \ | \ X^\top X =I_p\},
\end{equation*}
where $I_p$ is the $p\times p$ identity matrix. See \cite{AbsMahSep2008,ChakrabortyVemuri2019,EdelmanAriasSmith} and related works for applications of the Stiefel manifold. To tackle this computational bottleneck of \eqref{eq:karcher_mean}, an approach, described in \cref{sec:inverse_retraction}, approximates the Riemannian logarithm by \emph{lifting maps} \cite{Tetsuya2013,bouchard2025}. 

Our approach replaces the exact objective function $f_\nu$ in \eqref{eq:karcher_mean} by an approximate objective function $h_\nu$ that is based on theoretically guaranteed under- and over-approximations of $f_\nu$. The estimated Riemannian $l^\nu$-barycenter is then obtained by computing
\begin{equation}\label{eq:approx_mean}
	\widehat{x}^* \in \mathrm{arg}\hspace{-2pt}\min_{x\in\mathcal{M}}h_\nu(x).
\end{equation}
For $h_\nu$ well chosen, $\widehat{x}^*$ can be obtained at significantly lower computational cost than $x^*$. A computable bound on $\frac{f_\nu(\widehat{x}^*) - f_\nu(x^*)}{f_\nu(x^*)}$ is given in \Cref{thm:functional_gap} and numerical comparisons are conducted in \cref{sec:experiments}.
\vspace{-0.2cm}
\section{Preliminaries on manifolds}\label{sec:preliminaries}
We briefly introduce essential notions of Riemannian geometry. Details can be found, e.g., in \cite{AbsMahSep2008,sakai1996riemannian}. A Riemannian metric $g$, \emph{metric} for short, is a family $\{g_x:T_x\mathcal{M}\times T_x\mathcal{M}\rightarrow \mathbb{R}\}_{x\in\mathcal{M}}$ of symmetric positive definite bilinear forms that depend smoothly on the location $x$. For all $\xi\in T_x\mathcal{M}$, the norm induced by the metric is $\|\xi\|_g\coloneq \sqrt{g_x(\xi,\xi)}$ and the length $l_g(\gamma)$ of every continuously differentiable curve $\gamma:[0,1]\rightarrow\mathcal{M}$ is given by $l_g(\gamma)=\int_0^1\left\|\frac{\mathrm{d}}{\mathrm{d}t}\gamma(t)\right\|_g\mathrm{d}t$. If $\mathcal{M}$ is a complete Riemannian manifold, it follows from the Hopf-Rinow theorem \cite[Chap.~III, Thm.~1.1]{sakai1996riemannian} that the distance function induced by $g$ satisfies
\begin{equation*}
	d_g(x,\widetilde{x})= \min\{l_g(\gamma)\ | \ \gamma:[0,1]\rightarrow\mathcal{M},\gamma(0)=x,\ \gamma(1)=\widetilde{x}\}.
\end{equation*}
A \emph{geodesic} $\gamma_g:\mathbb{R}\rightarrow\mathcal{M}$ is a locally minimizing curve with constant speed $v>0$, i.e., for all $t\in \mathbb{R}$, there is $\varepsilon>0$ such that for all $s\in\mathbb{R}$, if $|t-s|<\varepsilon$, then $d_g(\gamma_g(t),\gamma_g(s)) = |t-s|v$.

The Riemannian exponential at $x\in\mathcal{M}$ is the function $\mathrm{Exp}_x:T_x\mathcal{M}\rightarrow \mathcal{M}$ mapping $\xi$ to the point reached at unit time by the geodesic with starting point $x$ and initial velocity $\xi$, see, e.g., \cite[Chap.~II, Sec.~2]{sakai1996riemannian}. The Riemannian logarithm is defined in~\cite{mataigne2024bounds,mataigne2025}:
\begin{definition}\label{def:logarithm}
	Let $x,\widetilde{x}\in\mathcal{M}$ where $\mathcal{M}$ is a complete Riemannian manifold endowed with a metric $g$. The Riemannian logarithm $\mathrm{Log}_x(\widetilde{x})$ is a set-valued function returning all $\xi\in T_x\mathcal{M}$ such that
	\begin{equation}
		\mathrm{Exp}_x(\xi)=\widetilde{x} \text{ and } \|\xi\|_g = d_g(x,\widetilde{x}).
	\end{equation}
	The curves $[0,1]\ni t\mapsto\mathrm{Exp}_x(t\xi)$ are then called minimal geodesics.
\end{definition}

It is known that the subset of $\mathcal{M}$ where $\mathrm{Log}_x$ is not a singleton is included in the \emph{cut locus} $\mathcal{C}_x$ and is a zero measure set of $\mathcal{M}$~\cite[Chap.~III,~Lem.~4.4]{sakai1996riemannian}. In numerical algorithms, it is thus harmless to consider that $\mathrm{Log}_x$ returns a singleton. The Riemannian logarithm is key to the Riemannian $l^\nu$-barycenter because, if $x\notin \cup_{i=1}^N \mathcal{C}_{x_i}$(and $x\notin \{x_i\}_{i=1}^N$ if $\nu<2$), then $\mathrm{grad}_x \left(d_g(x,x_i)^\nu\right) =-\nu d_g(x,x_i)^{\nu-2}\mathrm{Log}_x(x_i)$~\cite[Eq.~2.8]{Afsari13}, where $\mathrm{grad}_x$ denotes the Riemannian gradient. If $\nu\geq 2$ and if $x^*\notin \cup_{i=1}^N \mathcal{C}_{x_i}$, it can be deduced that $\mathrm{grad}_x(f_{\nu }(x^*)^\nu)=0$ if and only if~\cite[Sec.~2.1.5]{Afsari13}
\begin{equation}\label{eq:stationary_point}
	\sum_{i=1}^N d_g(x^*,x_i)^{\nu-2}\mathrm{Log}_{x^*}(x_i)=0_x.
\end{equation} 
The stationarity condition \eqref{eq:stationary_point} is illustrated in \Cref{fig:karcher_mean}.

\begin{figure}[ht]
	\vspace{-0.2cm}
	\centering
	\includegraphics[width = 0.65\textwidth ]{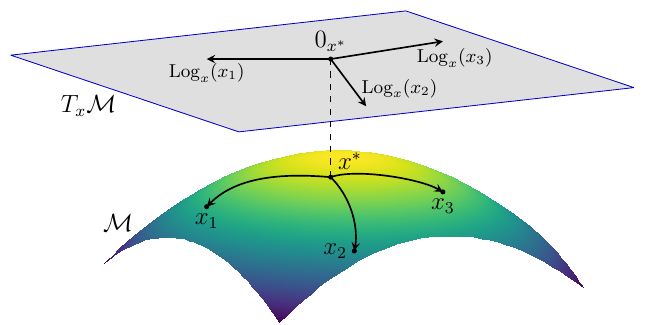}
	\vspace{-0.1cm}
	\caption{An artist view of the Riemannian $l^\nu$-barycenter. The figure represents a collection of points $\{x_i\}_{i=1}^3\subset\mathcal{M}$ and their Riemannian $l^\nu$-barycenter $x^*$. If $\nu\geq 2$, $x^*$ must be a first-order stationary point of \eqref{eq:karcher_mean}, i.e., satisfies \eqref{eq:stationary_point}.}
	\label{fig:karcher_mean}
\end{figure}
\vspace{-0.8cm}
\section{Minorizing and majorizing the objective function}
Assume one knows two functions $\widehat{m}_g,\widehat{M}_g:\mathcal{M}^2\rightarrow\mathbb{R}$ such that for all pairs $x, \widetilde{x}\in\mathcal{M}$, we have
\begin{equation}\label{eq:bounds}
	\widehat{m}_g(x,\widetilde{x})\leq d_g(x, \widetilde{x})\leq \widehat{M}_g(x,\widetilde{x}).
\end{equation}
If $\mathcal{M}$ is embedded in the Euclidean space $\mathbb{R}^M$, then $\widehat{m}_g$ and $\widehat{M}_g$ can often be chosen as functions of the Euclidean distance $\|x-\widetilde{x}\|_\mathrm{E}$. We give explicit examples of such functions on the Stiefel manifold $\mathrm{St}(n,p)$, $n\geq 2p$, endowed with the so-called $\beta$-metric \cite[Eq.~2.1]{mataigne2024bounds} in \cite[Thm.~7.1]{mataigne2024bounds}:
\begin{align*}
	\widehat{m}_\beta(X,\widetilde{X}) &= \min\{1,\sqrt{\beta}\}2\sqrt{p}\arcsin\left(\frac{\|X-\widetilde{X}\|_\mathrm{E}}{2\sqrt{p}}\right),\\
	\text{and}\quad \widehat{M}_\beta(X,\widetilde{X})&=\max\{1,\sqrt{\beta}\}\begin{cases}
	2\arcsin\left(\frac{\|X-\widetilde{X}\|_\mathrm{E}}{2}\right) \text{ if } \|X-\widetilde{X}\|_\mathrm{E}\leq 2,\\
	\frac{\pi}{2}\|X-\widetilde{X}\|_\mathrm{E}\hspace{1.1cm} \ \text{otherwise.}
	\end{cases}
\end{align*}
Let us define under- and over-approximations of $f_\nu$ from \eqref{eq:karcher_mean} by
\begin{equation*}
	\widehat{f}_\nu(x)\coloneq \left(\sum_{i=1}^N \widehat{m}_g(x,x_i)^\nu\right)^{\frac{1}{\nu}}\quad\text{and}\quad\widehat{F}_\nu(x)\coloneq \left(\sum_{i=1}^N \widehat{M}_g(x,x_i)^\nu\right)^{\frac{1}{\nu}}.
\end{equation*}
It is clear by~\eqref{eq:bounds} that for all $x\in\mathcal{M}$, we have $\widehat{f}_\nu(x)\leq f_\nu(x)\leq \widehat{F}_\nu(x)$. 

A choice for $h_\nu$ in \eqref{eq:approx_mean} that offers theoretical guarantees is the minimization of $\widehat{f}_\nu$. Indeed, by setting $h_\nu\coloneq\widehat{f}_\nu$, it follows by the optimality properties of a Riemannian $l^\nu$-barycenter $x^*$ and its approximation $\widehat{x}^* \in \mathrm{arg}\hspace{-1.5pt}\min_{x\in\mathcal{M}}\widehat{f}_\nu(x)$ that the following sequence of inequalities holds:
\begin{equation}\label{eq:inequalities}
	\widehat{f}_\nu(\widehat{x}^*)\leq \widehat{f}_\nu(x^*)\leq f_\nu(x^*)\leq f_\nu(\widehat{x}^*)\leq \widehat{F}_\nu(\widehat{x}^*).
\end{equation}
By~\eqref{eq:inequalities}, we can bound from above the functional relative error between $f_\nu(x^*)$ and $f_\nu(\widehat{x}^*)$. This is shown in \Cref{thm:functional_gap}. 
\begin{theorem}\label{thm:functional_gap}
	Let $\{x_i\}_{i=1}^N\subset\mathcal{M}$, $f_\nu$ and $x^*$ be defined as in \eqref{eq:karcher_mean}. Moreover, let $\widehat{x}^* \in \mathrm{arg}\hspace{-1.5pt}\min_{x\in\mathcal{M}}\widehat{f}_\nu(x)$, then \begin{equation}\label{eq:functional_upper_bound}
	\frac{f_\nu(\widehat{x}^*) - f_\nu(x^*)}{f_\nu(x^*)}\leq \frac{\widehat{F}_\nu(\widehat{x}^*)-\widehat{f}_\nu(\widehat{x}^*)}{\widehat{f}_\nu(\widehat{x}^*)}.
\end{equation}
\end{theorem}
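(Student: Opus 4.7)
The plan is to observe that \eqref{eq:functional_upper_bound} is, after adding $1$ to both sides, equivalent to the ratio inequality
\[
\frac{f_\nu(\widehat{x}^*)}{f_\nu(x^*)} \;\leq\; \frac{\widehat{F}_\nu(\widehat{x}^*)}{\widehat{f}_\nu(\widehat{x}^*)},
\]
so the whole proof reduces to bounding the numerator of the left-hand side from above and its denominator from below using quantities that appear on the right-hand side. Both bounds are already contained in the chain \eqref{eq:inequalities}.

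First, I would isolate from \eqref{eq:inequalities} the two inequalities I actually need: the rightmost one, $f_\nu(\widehat{x}^*)\leq\widehat{F}_\nu(\widehat{x}^*)$, which controls the numerator, and the composite $\widehat{f}_\nu(\widehat{x}^*)\leq \widehat{f}_\nu(x^*)\leq f_\nu(x^*)$, which controls the denominator. Note that the first step of the latter, $\widehat{f}_\nu(\widehat{x}^*)\leq \widehat{f}_\nu(x^*)$, is exactly the optimality of $\widehat{x}^*$ for $\widehat{f}_\nu$, while the second step uses the pointwise minorization $\widehat{f}_\nu\leq f_\nu$ inherited from \eqref{eq:bounds}. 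Second, after checking that the relevant quantities are strictly positive (if $f_\nu(x^*)=0$, then all $x_i$ coincide and both sides of \eqref{eq:functional_upper_bound} degenerate; otherwise $f_\nu(x^*)>0$ and $\widehat{f}_\nu(\widehat{x}^*)\geq 0$, with positivity of the denominator being needed to make the statement meaningful), I would divide the two bounds to obtain the displayed ratio inequality. Third, I would subtract $1$ from both sides to recover \eqref{eq:functional_upper_bound}.

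There is no genuine obstacle: the result is an algebraic consequence of \eqref{eq:inequalities}, the only delicate point being a brief remark on positivity of $\widehat{f}_\nu(\widehat{x}^*)$ so that the right-hand side of \eqref{eq:functional_upper_bound} is well defined. The substantive mathematical content sits in \eqref{eq:inequalities} itself, which in turn rests on \eqref{eq:bounds} and on the defining optimality properties of $x^*$ and $\widehat{x}^*$; once that chain is in hand, \Cref{thm:functional_gap} follows in one line.
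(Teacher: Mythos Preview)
Your proposal is correct and follows essentially the same approach as the paper: both arguments extract from \eqref{eq:inequalities} the two inequalities $f_\nu(\widehat{x}^*)\leq\widehat{F}_\nu(\widehat{x}^*)$ and $\widehat{f}_\nu(\widehat{x}^*)\leq f_\nu(x^*)$ and combine them algebraically. The only cosmetic difference is that the paper first subtracts to bound the numerator $f_\nu(\widehat{x}^*)-f_\nu(x^*)\leq\widehat{F}_\nu(\widehat{x}^*)-\widehat{f}_\nu(\widehat{x}^*)$ and then divides, whereas you first pass to the equivalent ratio inequality and then subtract~$1$; your added remark on positivity of the denominators is a welcome clarification that the paper leaves implicit.
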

\begin{proof}
	By~\eqref{eq:inequalities}, we have $f_\nu(\widehat{x}^*) - f_\nu(x^*)\leq \widehat{F}_\nu(\widehat{x}^*)-\widehat{f}_\nu(\widehat{x}^*)$. Moreover, it also holds that $\widehat{f}_\nu(\widehat{x}^*)\leq f_\nu(x^*)$. This concludes the proof.
\end{proof}
The upper bound from \Cref{thm:functional_gap} only depends on $\widehat{f}_\nu,\widehat{F}_\nu$ and $\widehat{x}^*$, which can all be computed at low computational cost. Moreover, the upper bound \eqref{eq:functional_upper_bound} can be small in practice, as exemplified in \cref{sec:experiments}. 

For any function $h_\nu$ in \eqref{eq:approx_mean}, the distance  $d_g(x^*,\widehat{x}^*)$ between a barycenter~$x^*$ and its approximation $\widehat{x}^*$ can be bounded from above by the computable quantity $N^{-\frac{1}{\nu}}2\widehat{F}_\nu(\widehat{x}^*)$, as shown in~\Cref{thm:upper_bound}.
\begin{theorem}\label{thm:upper_bound}
	Let $\{x_i\}_{i=1}^N\subset\mathcal{M}$, $f_\nu$ and $x^*$ be defined as in \eqref{eq:karcher_mean}. Then, for all $x\in\mathcal{M}$, we have $d_g(x^*,x)\leq N^{-\frac{1}{\nu}}2 f_\nu(x)\leq N^{-\frac{1}{\nu}}2 \widehat{F}_\nu(x) $. 
\end{theorem}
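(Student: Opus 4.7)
The second inequality $N^{-1/\nu} 2 f_\nu(x) \leq N^{-1/\nu} 2 \widehat{F}_\nu(x)$ is immediate from the pointwise bound $\widehat{m}_g \leq d_g \leq \widehat{M}_g$ in \eqref{eq:bounds}, which gives $f_\nu(x) \leq \widehat{F}_\nu(x)$ term-by-term and then via monotonicity of the $\nu$-th root. So the content of the theorem lies in the first inequality $d_g(x^*, x) \leq N^{-1/\nu} 2 f_\nu(x)$.

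My plan is to combine the triangle inequality for $d_g$, Minkowski's inequality for the $l^\nu$ norm on $\mathbb{R}^N$, and the optimality of $x^*$. Concretely, I would start from the pointwise triangle inequality $d_g(x^*, x) \leq d_g(x^*, x_i) + d_g(x_i, x)$, which holds for every index $i \in \{1,\dots,N\}$. Raising both sides to the power $\nu \geq 1$ and summing over $i$ yields
\begin{equation*}
	N\, d_g(x^*,x)^\nu \leq \sum_{i=1}^N \bigl(d_g(x^*,x_i) + d_g(x_i,x)\bigr)^\nu.
\end{equation*}
Taking $\nu$-th roots on both sides and applying Minkowski's inequality (valid because $\nu \geq 1$) to the right-hand side gives
\begin{equation*}
	N^{1/\nu} d_g(x^*,x) \leq \left(\sum_{i=1}^N d_g(x^*,x_i)^\nu\right)^{1/\nu} + \left(\sum_{i=1}^N d_g(x_i,x)^\nu\right)^{1/\nu} = f_\nu(x^*) + f_\nu(x).
\end{equation*}
Finally, the optimality of $x^*$ gives $f_\nu(x^*) \leq f_\nu(x)$, so the right-hand side is bounded by $2 f_\nu(x)$. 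Dividing by $N^{1/\nu}$ yields the claim.

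I do not expect a genuine obstacle here: the only things to double-check are that Minkowski's inequality is invoked with the correct exponent (we need $\nu \geq 1$, which is part of the standing assumption from \eqref{eq:karcher_mean}) and that $x^*$ is indeed a minimizer rather than just a stationary point, so that the comparison $f_\nu(x^*) \leq f_\nu(x)$ holds for every $x \in \mathcal{M}$. Both hold by hypothesis, so the argument is essentially a three-line computation once the ingredients are identified.
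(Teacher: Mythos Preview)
Your proof is correct and follows essentially the same route as the paper: triangle inequality on each $i$, an $l^\nu$ norm inequality to pass to $f_\nu(x^*)+f_\nu(x)$, and then optimality of $x^*$. The only cosmetic difference is that the paper averages the triangle inequality and invokes the comparison $\|a\|_1\leq N^{1-1/\nu}\|a\|_\nu$, whereas you sum the $\nu$-th powers and invoke Minkowski; both arrive at the identical intermediate bound $N^{1/\nu}d_g(x^*,x)\leq f_\nu(x^*)+f_\nu(x)$.
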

\begin{proof}

By the triangle inequality, for $i=1,...,N$, we have $d_g(x^*,x)\leq d_g(x^*, x_i) + d_g(x_i, x)$. Therefore, by usual $\nu$-norm inequalities in vector spaces, it follows that
\begin{align}
	\nonumber
	 d_g(x^*,x)&\leq \frac{1}{N}\left(\sum_{i=1}^N d_g(x^*, x_i) + \sum_{i=1}^N d_g(x_i, x)\right)\\
	\nonumber
	&\leq \frac{1}{N}\left[N^{1-\frac{1}{\nu}}\left(\sum_{i=1}^N d_g(x^*,x_i)^\nu\right)^\frac{1}{\nu} + N^{1-\frac{1}{\nu}}\left(\sum_{i=1}^N d_g(x,x_i)^\nu\right)^\frac{1}{\nu}\right]\\
	\nonumber
	&= N^{-\frac{1}{\nu}} (f_\nu(x^*)+f_\nu(x)) \leq N^{-\frac{1}{\nu}} 2f_\nu(x).
\end{align}
To conclude, it is enough to acknowledge that $f_\nu(x)\leq \widehat{F}_\nu(x)$.
\end{proof}
In particular, \Cref{thm:upper_bound} yields $d_g(x^*,\widehat{x}^*)\leq N^{-\frac{1}{\nu}}2 \widehat{F}_\nu(\widehat{x}^*)$. Therefore, setting $h_\nu=\widehat{F}_\nu$ in \eqref{eq:approx_mean} and thus $\widehat{x}^* \in \mathrm{arg}\hspace{-1.5pt}\min_{x\in\mathcal{M}}\widehat{F}_\nu(x)$ minimizes the upper bound on $d_g(x^*,\widehat{x}^*)$ from  \Cref{thm:upper_bound}.

Similarly, the quantity $d_g(x^*,\widehat{x}^*)$ can be bounded from below. However, in comparison with \Cref{thm:upper_bound}, the lower bound from \Cref{thm:lower_bound} can not be obtained without knowing $x^*$.
\begin{theorem}\label{thm:lower_bound}
	Let $\{x_i\}_{i=1}^N\subset\mathcal{M}$, $f_\nu$ and $x^*$ be defined as in \eqref{eq:karcher_mean}. Then, for all $x\in\mathcal{M}$, we have $d_g(x^*,x)\geq N^{-1} (f_\nu(x)-f_\nu(x^*))$. 
\end{theorem}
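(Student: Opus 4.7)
The plan is to prove the contrapositive-style bound $f_\nu(x) - f_\nu(x^*) \leq N\,d_g(x^*,x)$ by combining Minkowski's inequality on $\mathbb{R}^N$ with the reverse triangle inequality applied pointwise on $\mathcal{M}$. This mirrors the structure of the proof of \Cref{thm:upper_bound}, but now the triangle-type inequality is chained in the opposite direction: instead of bounding $d_g(x^*,x)$ by a sum, we bound a sum by $d_g(x^*,x)$.

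First, I would introduce the notational shorthand $d(y)\coloneq(d_g(y,x_1),\ldots,d_g(y,x_N))\in\mathbb{R}^N$, so that $f_\nu(y)=\|d(y)\|_\nu$. Minkowski's inequality on $\mathbb{R}^N$ applied to $d(x)=(d(x)-d(x^*))+d(x^*)$ yields
\begin{equation*}
    f_\nu(x) \;=\; \|d(x)\|_\nu \;\leq\; \|d(x)-d(x^*)\|_\nu + \|d(x^*)\|_\nu \;=\; \|d(x)-d(x^*)\|_\nu + f_\nu(x^*).
\end{equation*}
Next, the reverse triangle inequality for $d_g$ gives, coordinate-wise, $|d_g(x,x_i)-d_g(x^*,x_i)|\leq d_g(x^*,x)$ for each $i$, whence $\|d(x)-d(x^*)\|_\nu \leq N^{1/\nu}\,d_g(x^*,x)$. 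Since $\nu\geq 1$ and $N\geq 1$, we have $N^{1/\nu}\leq N$, so combining gives $f_\nu(x)-f_\nu(x^*)\leq N\,d_g(x^*,x)$, which rearranges to the claimed bound.

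There is no real obstacle here: the only place one has to be a little careful is the final monotonicity step $N^{1/\nu}\leq N$, which requires $\nu\geq 1$ (guaranteed by the standing assumption in \eqref{eq:karcher_mean}). In fact the argument yields the slightly sharper $d_g(x^*,x)\geq N^{-1/\nu}(f_\nu(x)-f_\nu(x^*))$, but the form stated in the theorem is obtained after this harmless loosening and has the cosmetic advantage of presenting a coefficient that is independent of $\nu$.
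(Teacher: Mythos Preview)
Your proof is correct and uses the same two ingredients as the paper's argument---the reverse triangle inequality for $d_g$ and the (reverse) Minkowski inequality on $\mathbb{R}^N$---only chained in the opposite order. The paper first bounds $d_g(x^*,x)\geq |d_g(x,x_i)-d_g(x^*,x_i)|$, averages, passes from the $\ell^1$- to the $\ell^\nu$-norm via $\|v\|_1\geq\|v\|_\nu$, and then applies the reverse Minkowski inequality; your route avoids the detour through the $\ell^1$-norm and thereby directly yields the sharper constant $N^{-1/\nu}$ before the final loosening to $N^{-1}$.
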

\begin{proof}
By the reverse triangle inequality, for $i=1,...,N$, we have $d_g(x^*,x)\geq |d_g(x^*, x_i) - d_g(x_i, x)| $. Therefore, it follows that
\begin{align}
\nonumber	
	d_g(x^*,x)&\geq \frac{1}{N}\sum_{i=1}^N |d_g(x_i, x) - d_g(x^*, x_i)|\geq \frac{1}{N}\left(\sum_{i=1}^N |d_g(x_i, x) - d_g(x^*, x_i)|^\nu \right)^\frac{1}{\nu}\\
	\label{eq:reverse_triangle}
	&\geq \frac{1}{N}\left[\left(\sum_{i=1}^N |d_g(x, x_i)|^\nu \right)^\frac{1}{\nu}-\left(\sum_{i=1}^N |d_g(x^*, x_i)|^\nu \right)^\frac{1}{\nu}\right]\\
	\nonumber
	&=N^{-1}(f_\nu(x)-f_\nu(x^*)),
\end{align}
where \eqref{eq:reverse_triangle} holds by the reverse triangle inequality.
\end{proof}
In view of \Cref{thm:lower_bound}, we have $d_g(x^*,\widehat{x}^*)\geq N^{-1} (f_\nu(\widehat{x}^*)-f_\nu(x^*))$.

To conclude this section, we briefly discuss the case of linear bounds $\widehat{m}_g$ and $\widehat{M}_g$ in terms of the Euclidean distance, i.e., there are $l_M\geq l_m>0$ such that $
	\widehat{m}_g(x,\widetilde{x})=l_m\|x-\widetilde{x}\|_\mathrm{E} \text{ and }\widehat{M}_g(x,\widetilde{x})=l_M\|x-\widetilde{x}\|_\mathrm{E}$.
Then, by choosing $\widehat{x}^* \in \mathrm{arg}\hspace{-1.5pt}\min_{x\in\mathcal{M}}\widehat{f}_\nu(x)$, \Cref{thm:functional_gap} yields
\begin{equation*}
	\frac{f_\nu(\widehat{x}^*) - f_\nu(x^*)}{f_\nu(x^*)}\leq \frac{l_M-l_m}{l_m}=\frac{l_M}{l_m}-1.
\end{equation*}
For example, on the Stiefel manifold $\mathrm{St}(n,p)$, $n\geq 2p$, we have $l_m=\min\{1,\sqrt{\beta}\}$ and $l_M=\max\{1,\sqrt{\beta}\}\frac{\pi}{2}$ \cite[Cor.~7.2]{mataigne2024bounds}. Moreover, if $\nu=2$, it is easily shown that $\widehat{x}^*$ is a projection of $\overline{x}=\frac{1}{N}\sum_{i=1}^N x_i$ on $\mathcal{M}$. Indeed, we have
\begin{equation*}
	\widehat{x}^*\in\mathrm{arg}\hspace{-2pt}\min_{x\in\mathcal{M}} \sum_{i=1}^N\|x-x_i\|_\mathrm{E}^2
	=\mathrm{arg}\hspace{-2pt}\min_{x\in\mathcal{M}} \|x-\overline{x}\|_\mathrm{E}^2\eqcolon \mathrm{Proj}_\mathcal{M}(\overline{x}).
\end{equation*}
Finally, $\mathrm{Proj}_\mathcal{M}(\overline{x})$ can be efficiently computed on many manifolds such as the Grassmann manifold, the Stiefel manifold and the manifold of fixed-rank matrices. On the Stiefel manifold, it is given by the orthogonal factor of the polar decomposition. 
However, nonlinear bounds in \eqref{eq:bounds} usually offer better approximations of the true Riemannian $l^\nu$ barycenter $x^*$.
\vspace{-0.2cm}
\section{Lifting maps as approximations of the Riemannian logarithm}\label{sec:inverse_retraction}

Retractions offer a framework in Riemannian optimization to approximate the Riemannian exponential at low computational cost~\cite{AbsMahSep2008}. A retraction $R$ is a smooth family of mappings $\{R_x:T_x\mathcal{M}\rightarrow\mathcal{M}\}_{x\in\mathcal{M}}$ such that for all $x\in\mathcal{M},\ \xi \in T_x\mathcal{M}$, $t\geq 0$, the retraction approximates the Riemannnian exponential at first order: $R_x(t\xi) = \mathrm{Exp}_x(t\xi)+\mathrm{o}(t)$. 

Similarly, \emph{lifting maps}, also called tangent-bundle maps, approximate the Riemannian logarithm~\cite{Fiori2015,ZhuSato2020,BarberoLinan2023}. A lifting map $L$ is a smooth family of mappings $\{L_x:\mathcal{M}\rightarrow T_x\mathcal{M}\}_{x\in\mathcal{M}}$ such that for all $x\in\mathcal{M}, \xi \in T_x\mathcal{M}$ and $t\geq 0$ small enough, $L_x(\mathrm{Exp}_x(t\xi)) =t\xi+ \mathrm{o}(t)$. As a result,
\begin{equation}\label{eq:distance_approx}
	 d_g(x, \mathrm{Exp}_x(t\xi))=\|L_x(\mathrm{Exp}_x(t\xi))\|_g+\mathrm{o}(t).
\end{equation}
In a neighborhood $\mathcal{V}\subset \mathcal{M}$ of $x$, a lifting map can be obtained by taking the inverse of a retraction, i.e., $L_x\coloneq R_x^{-1}$.
An example of such a lifting map is given by the orthogonal projection on the tangent space~$T_x\mathcal{M}$: $
L_x(\widetilde{x})\coloneq\mathrm{Proj}_{T_x\mathcal{M}}(\widetilde{x}-x)$. This map is locally the inverse of the so-called \emph{orthographic retraction} \cite{Tetsuya2013}. 

To approximate the Riemannian $l^\nu$-barycenter, the approach from \cite{Tetsuya2013,Fiori2015,bouchard2025} is to consider the limit of the fixed-point iteration
\begin{equation}\label{eq:fixedpoint}
 x^{k+1} = R_{x^k}\left(\sum_{i=1}^N \nu \|L_{x^k}(x_i)\|_g^{\nu-2} L_{x^k}(x_i)\right)\text{ for } k=0,1,2....,
\end{equation}
which approximates the stationarity condition~\eqref{eq:stationary_point}. However, for  $d_g(x,\widetilde{x})$ large enough, $L_x(\widetilde{x})$ can be a very poor approximation of $\mathrm{Log}_x(\widetilde{x})$, as illustrated in \Cref{fig:karcher_inverse_retraction}. An additional caveat is the difficulty to define the mapping $L_x(\widetilde{x})$ for all $\widetilde{x}\in\mathcal{M}$. For example, on the Stiefel manifold, the inverses of the polar retraction, the QR retraction and the Cayley retraction have guaranteed existence only in a neighborhood of $x$ \cite{Tetsuya2013}.
\begin{figure}[h]
	\vspace{-0.4cm}
	\centering
	\includegraphics[width=0.65\textwidth]{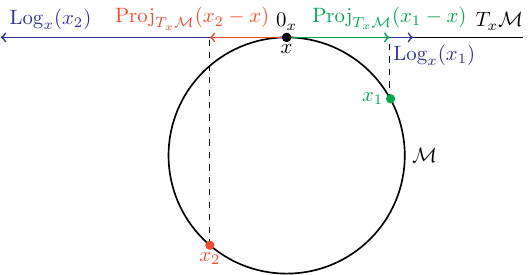}
	\vspace{-0.1cm}
	\caption{While the lifting map approximates well the Riemannian logarithm for points $x$ and $x_1$ that are close enough, they can differ significantly if the distance between the points $x$ and $x_2$ is large.}
	\label{fig:karcher_inverse_retraction}
\end{figure}
\section{Numerical experiments on the Stiefel manifold}\label{sec:experiments}
We conduct numerical experiments on the Stiefel manifold $\mathrm{St}(n,p)$ endowed with the $\beta$-metric \cite{mataigne2024bounds} for different values of $N,\nu,n,p$ and $\beta$. The code to reproduce these experiments is available at \url{https://github.com/smataigne/Approximate_barycenter.jl}. 

 We compare three algorithms: (A1) is the Riemannian gradient descent (RGD) on $(f_\nu)^\nu$, (A2) is the RGD on $(\widehat{f}_\nu)^\nu$ and (A3) is the fixed point iteration from~\eqref{eq:fixedpoint} with $R_X$ being the popular QR retraction and $L_X$ being the orthogonal projection on $T_X\mathrm{St}(n,p)$. The Riemannian logarithms in (A1) are numerically estimated using \cite[Alg.~2~and~4]{ZimmermannHuper22}. The sequences generated by (A1), (A2) and (A3) are respectively denoted by $\{X^k\}_{k=1}^{*}$, $\{Y^k\}_{k=1}^{*}$ and $\{Z^k\}_{k=1}^{*}$ with $X^0=Y^0=Z^0$.

The stopping criterion is as follows: (A1) runs until $|f_\nu(X^*)^\nu-f_\nu(X^{*-1})^\nu|\leq \varepsilon f_\nu(X^0)^\nu $, (A2) runs until $|\widehat{f}_\nu(Y^*)^\nu-\widehat{f}_\nu(Y^{*-1})^\nu|\leq \varepsilon  \widehat{f}_\nu(Y^0)^\nu $ and (A3) runs until $|g_\nu(Z^*)^\nu-g_\nu(Z^{*-1})^\nu|\leq \varepsilon  g_\nu(Z^0)^\nu $ where we define the function $g_\nu(Z)^\nu \coloneq \sum_{i=1}^N \|\mathrm{Proj}_{T_Z\mathrm{St}(n,p)}(X_i-Z)\|_\beta^\nu$. We choose $\varepsilon =10^{-6}$.

We consider spread and clustered datasets $\{X_i\}_{i=1}^N$: spread matrices are sampled with uniform distribution on $\mathrm{St}(n,p)$ while clustered matrices are sampled such that their pairwise distances do not exceed the known (when $\beta=1$~\cite{ZimmermannStoye25}) or suspected (when $\beta=\frac{1}{2}$~\cite{absil2025}) value of the injectivity radius.

The results of the experiments are given in \Cref{tab:table}, where they are compared with the bounds of \Cref{thm:functional_gap} and \Cref{thm:upper_bound}. In these experiments, we can observe that (i) both (A2) and (A3) provide quantitatively accurate approximations of the barycenter computed by (A1), (ii) the bounds are satisfied and (iii) the bounds give a better estimate of the error measures when the dataset is clustered.
\vspace{-0.2cm}
\begin{table}[h]
\centering
{\small 

    \begin{tabular}{||c|c|c|c|c||}
\hline
$f_\nu(X^*)$&$f_\nu(Y^*)$&$f_\nu(Z^*)$&$\frac{f_\nu(X^*)-f_\nu(Y^*)}{f_\nu(X^*)}\leq \frac{\widehat{F}_\nu(Y^*)-\widehat{f}_\nu(Y^*)}{\widehat{f}_\nu(Y^*)}$&$d_g(X^*,Y^*)\leq N^{-\frac{1}{\nu}}2\widehat{F}_\nu(Y^*)$\\
\hline
\multicolumn{5}{||c||}{Clustered dataset, $(N,n, p, \beta,\nu) = (5,50,20,1,1)$}\\
\hline
4.384 & 4.384 & 4.384 &    
 $1.114$e$-6 \leq 0.03344$ & 
 $0.001613\leq 1.811$  \\
\hline
\multicolumn{5}{||c||}{Spread dataset, $(N,n, p,\beta,\nu) = (5,50,20,1,2)$}\\
\hline
11.59 & 11.66 & 11.66 & 
 $0.005723 \leq 0.4886$ &
 $0.7046\leq 15.03$\\
\hline
\multicolumn{5}{||c||}{Clustered dataset, $(N,n, p,\beta,\nu) = (10,100,40,0.5,1)$}\\
\hline
8.426 & 8.427 & 8.427 & 
 $9.751$e$-5 \leq 0.4707$ &
 $0.01176\leq 1.953$\\
\hline
\multicolumn{5}{||c||}{Spread dataset, $(N,n, p, \beta,\nu) = (10,100,40,0.5,2)$}\\
\hline
23.50 & 24.23 & 24.23 & 
 $0.03116 \leq 1.076$ &
 $3.274\leq 23.52$\\
\hline
	\end{tabular}}
	\vspace{0.2cm}
	\caption{Average results over 10 runs of the performances of algorithms (A1), (A2) and (A3) for the computation of an approximate Riemannian $l^\nu$-barycenter.} 
	\label{tab:table}
	\vspace{-0.5cm}
\end{table}

In \Cref{fig:time}, we show convergence curves of algorithms (A1), (A2) and (A3) for $(N,n,p,\beta,\nu)=(5,100,40,\frac{1}{2}, 2)$ on a spread dataset. We also show the evolution of the running times as the size $p$ of $\mathrm{St}(140,p)$ increases. We can observe that the computational costs of (A2) and (A3) are significantly smaller than the cost of (A1) (by a factor larger than 100) and that (A2) and (A3) have similar computational costs. For large values of $N,n$ and $p$, (A1) is not tractable.

In conclusion, both (A2) and (A3) are computationally efficient approaches providing accurate approximations of the Riemannian $l^\nu$-barycenter. They are thus worthwhile alternatives to (A1). Moreover, (A2) benefits from theoretical guarantees (\Cref{thm:functional_gap}) for spread datasets where the behavior of (A3) is not known in general.


\begin{figure}[ht]
	\centering
	\vspace{-0.2cm}
	\includegraphics[width = 0.65\textwidth]{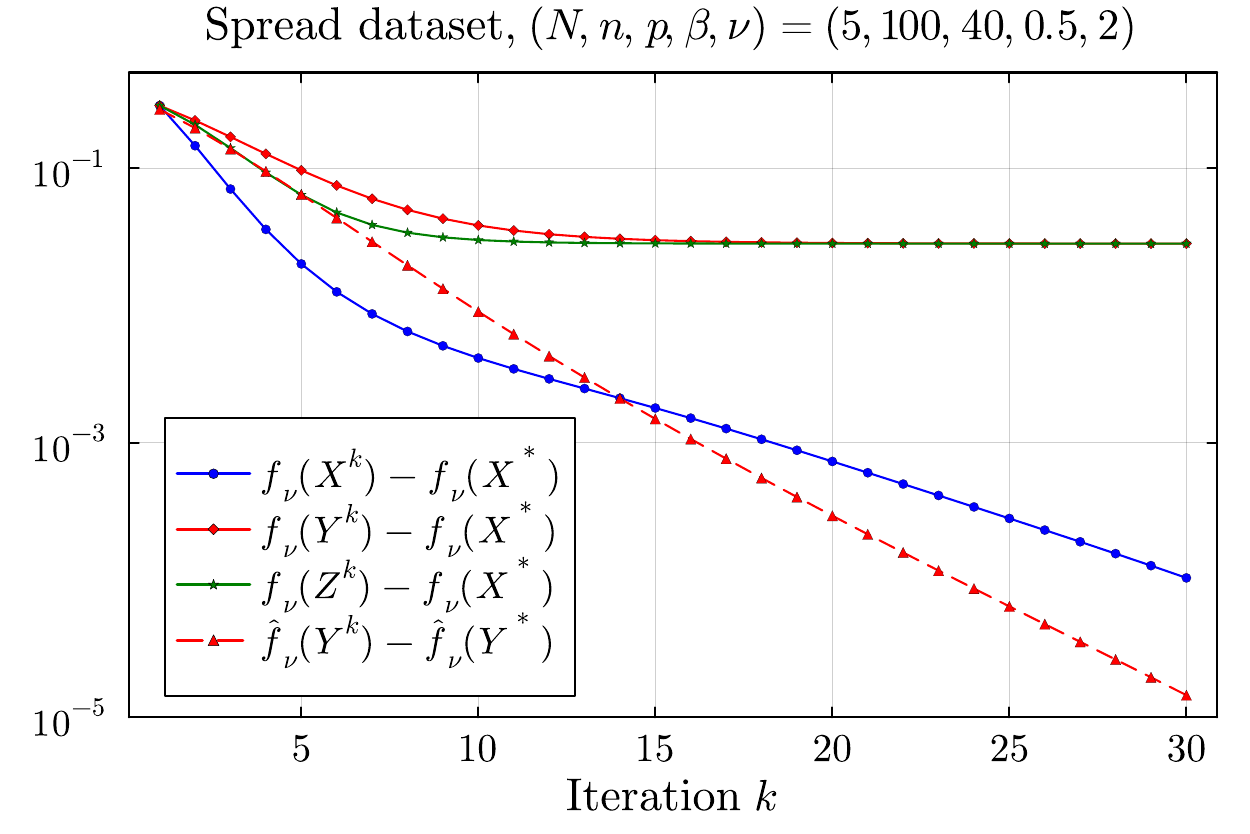}
		\includegraphics[width = 0.67\textwidth]{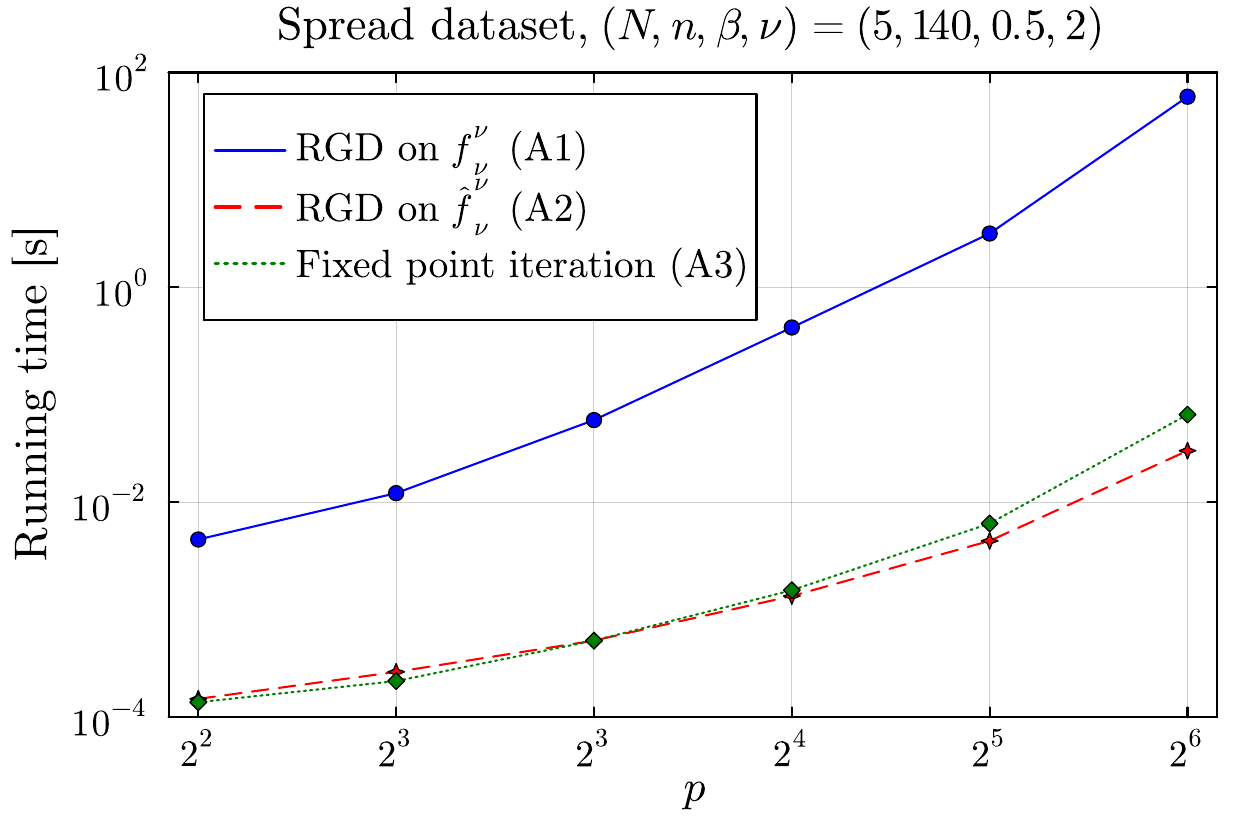}
		\vspace{-0.2cm}
		\caption{At the top: convergence curves of algorithms (A1), (A2) and (A3) to obtain the results of \Cref{tab:table}. For each algorithm, the evolution of $f_\nu$ is represented with solid lines. The dashed line represents, for $\{Y^k\}_{k=1}^{*}$, the evolution of the optimized objective function $ \widehat{f}_\nu$. At the bottom: a comparison of the evolution of the running time of the three algorithms as the size $p$ of $\mathrm{St}(140,p)$ increases using \href{https://juliaci.github.io/BenchmarkTools.jl/stable/}{BenchmarkTools.jl}.}
		\vspace{-0.6cm}
		\label{fig:time}
\end{figure}
\vspace{-0.4cm}
\begin{credits}
\subsubsection{\ackname} Simon Mataigne is a Research Fellow of the Fonds de la Recherche Scientifique - FNRS. This work was supported by the Fonds de la Recherche Scientifique - FNRS under grant no T.0001.23. Nina Miolane acknowledges funding from the NSF Career 2240158.

\subsubsection{\discintname}
The authors have no competing interests to declare that are
relevant to the content of this article.
\end{credits}
%
%
%
\bibliographystyle{splncs04}
\bibliography{approximatebib.bib}

\end{document}